\newcommand{\eL}{\mathbb{L}}
\newcommand{\Cm}{\mathcal{C}}
\renewcommand{\epsilon}{\varepsilon}
\newcommand{\ac}{\overline{\mathrm{ac}}}
\newcommand{\ord}{\mathrm{v}}
\newcommand{\Rr}{\mathbb{R}}
\newcommand{\Nn}{\mathbb{N}}
\newcommand{\Zz}{\mathbb{Z}}
\newcommand{\Qq}{\mathbb{Q}}
\newcommand{\Aa}{\mathbb{A}}
\newcommand{\Jac}{\mathrm{Jac}}
\newcommand{\set}[1]{\left\{#1\right\}}
\newcommand{\abs}[1]{\left| #1 \right|}
\newcommand{\kk}{\mathbf{k}}
\newcommand{\un}{\mathds{1}}
\newcommand{\Var}[1]{\mathrm{Var}_{#1}}
\newcommand{\K}[1]{\mathrm{\bf K}(#1)}
\newcommand{\calO}{\mathcal{O}}
\newcommand{\calD}{\mathcal{D}}
\newcommand{\FF}{\mathbb{F}}
\newcommand{\loc}{\mathrm{loc}}
\newcommand{\dbigcup}{\overset{.}{\bigcup}}
\theoremstyle{plain}
\newtheorem{nth}{theorem}[section]
\newtheorem{theorem}[nth]{Theorem}
\newtheorem{lemma}[nth]{Lemma}
\theoremstyle{definition}
\newtheorem{lemma-Definition}[nth]{Lemma-Definition}
\newtheorem{proposition-definition}[nth]{Proposition-Definition}
\theoremstyle{remark}
\begin{document}

\title{A motivic local Cauchy-Crofton formula}
\author{Arthur Forey}
\address{Sorbonne Universit\'es, UPMC Univ Paris 06, UMR 7586 CNRS, Institut Math\'ematique de Jussieu, F-75005 Paris, France}
\email{arthur.forey@imj-prg.fr}
\urladdr{https://webusers.imj-prg.fr/$\sim$arthur.forey/}
\thanks{This research was partially supported by ANR-15-CE40-0008 (D\'efig\'eo).}
\date{\today}

\keywords{Motivic integration, Henselian valued fields, local density, metric invariants}
\subjclass[2010]{Primary 03C98 \and  12J10 \and 14B05 \and 32Sxx \and Secondary 03C68 \and 11S80.}

\begin{abstract}
In this note, we establish a version of the local Cauchy-Crofton formula for definable sets in Henselian discretely valued fields of characteristic zero. It allows to compute the motivic local density of a set from the densities of its projections integrated over the Grassmannian. 
\end{abstract}

\maketitle

\section{Introduction}

The aim of this note is to establish a motivic analogue of the local Cauchy-Crofton formula.  The classical Cauchy-Crofton formula is a geometric measure theory result stating that the volume of a set $X$ of dimension $d$ can be recovered by integrating over the Grassmannian the number of points of intersection of $X$ with affine spaces of codimension $d$, see for example \cite{federer_geometric_1969}. It has been used by Lion \cite{lion_densite_1998} to show the existence of the local density of semi-Pfaffian sets. Comte \cite{comte_formule_1999}, \cite{comte_equisingularite_2000} has established a local version of the formula for sets $X\subseteq \Rr^n$ definable in an $o$-minimal structure.  The formula states that the local density of such a set $X$ can be recovered by integrating over a Grassmannian the density of the projection of $X$ on subspaces. This allows him to show the continuity of the real local density along Verdier's strata in \cite{comte_equisingularite_2000}.

The local Cauchy-Crofton formula appears as a first step toward comparing the local Lipschitz-Killing curvature invariants and the polar invariants of a germ of a definable set $X\subseteq \Rr^n$. It is shown by Comte and Merle in \cite{comte_equisingularite_2008} that one can recover one set of invariants by linear combination of the other, see also \cite{comte_deformation_2015}.

A notion of local density for definable sets in Henselian valued field of characteristic zero has been develloped by the autor in \cite{MLD}. Our formula is a new step toward developing a theory of higher local curvature invariants in non-archimedean geometry.

A $p$-adic analogue has been developed by Cluckers, Comte and Loeser in \cite[Section 6]{cluckers_local_2012}. We will follow closely their approach. Our precise result appears as Theorem \ref{thm-localCC} at the end of Section \ref{Grass}.

\subsection*{Acknoledgements}
Many thanks to Georges Comte for encouraging me to work on this project and useful discussions. I also thanks Michel Raibaut for interesting comments.

\section{Motivic integration and local density}

We assume the reader is familiar with the notion of motivic local density developed by the autor in \cite{MLD} and in particular with Cluckers and Loeser's theory of motivic integration \cite{cluckers_constructible_2008}, \cite{cluckers_motivic_????}. See \cite[Section 2.7]{MLD} for a short summary of the theory.  

We adopt the notations and conventions of \cite{MLD}. In particular, we fix $\mathcal{T}$, a tame or mixed-tame theory of valued fields in the sense of \cite{cluckers_motivic_????}. Such a $\mathcal{T}$ always admits a Henselian discretely valued field of characteristic zero as a model.  Definable means definable without parameters in $\mathcal{T}$ and $K$ is (the underlining valued field of) a model of $\mathcal{T}$ with discrete value group and residue field $k$ enough saturated.

For example, we can take for $\mathcal{T}$ the theory of a discretely valued field of characteristic zero in the Denef-Pas language ; if the residue field is of characteristic $p>0$, one need to add the higher angular components. 

For each definable set $X$, Cluckers and Loeser define a ring of constructible motivic functions $\Cm(X)$, which include the characteristic functions of any definable set $Y\subseteq X$. 

For $\varphi\in\Cm(X)$ of support of dimension $d$ and that is integrable, they define
\[
\int^d \varphi\in \Cm(\set{*}).
\]
If $d=\dim(X)$, we drop the $d$ from the notation. If $\varphi$ is the characteristic function of some definable set $Y\subseteq X$, we denote the integral $\mu_d(Y)$.

If the residue field $k$ is algebraically closed, the target ring of motivic integration is equal to 
\[
\Cm(\set{*})={\K{\Var{k}}}_\loc:=\K{\Var{k}}\left[\eL^{-1},\left(\frac{1}{1-\eL^{-\alpha}}\right)_{\alpha\in \Nn^*}\right],
\]
where $\eL=[\Aa^1_k]$. 

Fix some definable set $X\subseteq K^m$ of dimension $d$, and $x\in K^m$, define
\[
\theta_n=\frac{\mu_d(X\cap B(x,n)}{\eL^{-nd}},
\]
where $B(x,n)$ is the ball of center $x$ and valuative radius $n$. 

There is some $e\in \Nn^*$ such that for each $i$, the subsequence $(\theta_{ke+i})_{k\in \Nn}$ converges to some $d_i\in \Cm(\set{*})$. Here $\Cm(\set{*})$ has a topology induced by the degree in $\eL$. We define the motivic local density of $X$ at $x$ to be
\[
\Theta_d(X,x)=\frac{1}{e}\sum_{i=0}^{e-1} d_i\in \Cm(\set{*})\otimes \Qq.
\]

It is shown in \cite{MLD} that one can compute the motivic local density on the tangent cone. Fix some $\Lambda \in \mathcal{D}$, where $\mathcal{D}=\set{\Lambda_{n,m}\mid n,m\in \Nn^*}$,
\[
\Lambda_{n,m}=\set{\lambda\in K^\times\mid \ac_m(\lambda)=1, \ord(\lambda)=0 \mod n}.
\]
The $\Lambda$-tangent cone of $X$ at $x$ is the definable set
\[
C_x^\Lambda(X)=\set{u\in K^n\mid \forall i\in \Zz, \exists y\in X, \exists \lambda\in \Lambda, \ord(y-x)\geq i, \ord(\lambda(y-x)-u)\geq i}.
\]
The $\Lambda$-tangent cone with multiplicities is a definable function $CM_x^\Lambda(X)\in\Cm(K^m)$, of support $C_x^\Lambda(X)$, well defined up to a set of dimension $<d$. For example, if $X\subseteq K^m$ is of dimension $m$, there is no muliplicity to take into account and $CM_x^\Lambda(X)$ is the characteristic function of $C_x^\Lambda(X)$. 

Theorem 3.25 and 5.12 from \cite{MLD} state that there is a $\Lambda$ such that for all $\Lambda'\subseteq \Lambda$, $C_x^\Lambda(X)=C_x^{\Lambda'}(X)$ and
\[
\Theta_d(X,x)=\Theta_d(CM_x^\Lambda(X),0).
\]

\section{Local constructible functions}

Consider a definable function $\pi : X\to Y$ between definable sets $X$ and $Y$ of dimension $n$. Recall form \cite{cluckers_motivic_????} the notation $\pi_!(\varphi)\in \Cm(Y)$ for any motivic constructible function $\varphi\in \Cm(X)$.

If $X$ is a definable subset of $K^{n}$ and $x\in K^n$, define the ring of germs of constructible motivic functions at $x$ by
$\Cm(X)_x:=\Cm(X)/\sim$, where $\varphi\sim\psi$ if there is an $r\in \Nn$ such that $\un_{B(x,r)}\varphi=\un_{B(x,r)}\psi$. in particular, if $\varphi\sim\psi$ are locally bounded, then $\Theta_d(\varphi,x)=\Theta_d(\psi,x)$ hence the local motivic density is defined on $\Cm(K^n)_x$.

Consider now a linear projection $\pi : K^n\to K^d$ and let $X\subseteq K^n$ a definable set of dimension $d$. Say that condition $(*)$ is satisfied if $\pi_{\vert X\cap B(x,r)}$ is finite-to-one for some $r\geq 0$. 

 Then define $\pi_{!,x}(\varphi)=\pi_!(\un_{B(x,r)}\varphi)\in \Cm(Y)_{\pi(x)}$ for $\varphi\in \Cm(X)_x$. It does not depend on the large enough $r$ chosen. Indeed, as $\pi : X\cap B(x,r) \to Y$ is finite-to-one, then there is $W_r\subseteq R_s^s$ finite with $X\cap B(x,r)\simeq W_r\times Y$ and the composite 
 \[W\times Y\overset{\pi}{\to} X\cap B(x,r)\to Y\]
 being the projection on $Y$. Hence (up to taking a finite definable partition of $X$), for some $r_0$, for any $r\geq r_0$, $W_r=W_{r_0}$ and $\pi(X\cap B(x,r)=\pi(Y)\cap B(\pi(x),\alpha r)$, which means that $\pi_!(\un_{B(x,r)}\varphi)\in \Cm(Y)/_{\pi(x)}$ is independent of $r$. 

\section{Grassmannians}
\label{Grass}
Fix a point $x\in K^n$ and view $K^n$ as a $K$-vector space with origin $0$. Then denote by $G(n,d)_K$ the Grassmannian of dimension $d$ subvector spaces of $K^n$. The canonical volume form on $G(n,d)_K$ invariant under $GL_n(\mathcal{O}_K)$-transformations induce a constuctible function $\omega_{n,d}$ on $G(n,d)$ invariant under $GL_n(\mathcal{O}_K)$ transformations, see \cite[Section 15]{cluckers_constructible_2008} for details. Since $G(n,d)_k$ is smooth and proper, the motivic volume of $G(n,d)_K$ is equal to the class $[G(n,d)_k]$ of $G(n,d)_k$ is the (localized) Grothendieck group of varieties over the residue field $k$. Denoting $\mathbb{F}_q$ the finite field with $q$ elements, it is known, see for example \cite{andrews_theory_1976}, that 
\[
\abs{G(n,d)(\mathbb{F}_q)}=\frac{(q^n-1)(q^n-q)\cdots(q^n-q^{r-1})}{(q^r-1)(q^r-q)\cdots(q^r-q^{r-1})}.
\]
The proof rely on the fact that 
\[
\abs{GL(n)(\FF_q)}=\abs{G(n,r)(\FF_q)}\abs{GL(r)(\FF_q)}q^{r(n-r)}\abs{GL(n-r)(\FF_q)}.
\]

Since the analog of this formula holds in $\K{\Var{k}}$, the same proof shows that
\[[G(n,d)_k]=\frac{(\eL^n-1)(\eL^n-\eL)\cdots(\eL^n-\eL^{r-1})}{(\eL^r-1)(\eL^r-\eL)\cdots(\eL^r-\eL^{r-1})}\in 
{\K{\Var{k}}}_\loc.\] 
Note that even if the right hand side can be written without denominator, hence as an element of $\K{\Var{k}}$, one has to work in ${\K{\Var{k}}}_\loc$ to show the equality with this method. In particular, this shows that $[G(n,d)_k]$ is invertible in ${\K{\Var{k}}}_\loc$. The motivic volume of $G(n,r)_K$ is then invertible. Hence we can normalize $\omega_{n,r}$ such that 
\[
1=\int_{V\in G(n,r)} \omega_{n,r}(V).
\]

For $V\in G(n,n-d)$, define $p_V : K^n\to K^n/V$ the canonical projection. We identify $K^n/V$ to $K^d$ as follows. There is some $g\in GL_n(\calO_K)$ such that $g(K^{n-d}\times\set{0}^d)=V$. We identify $K^n/V$ to $g(\set{0}^{n-d}\times K^{n-d})$. The particular choice of $g$ does not matter thanks to the change of variable formula. If $X$ is a dimension $d$ definable subset of $K^n$ then there is an dense definable subset $\Omega=\Omega(X,x)$ of $G(n,n-d)$ such that for every $V\in \Omega$, $\pi_V$ satisfies condition $(*)$. Indeed the tangent $K^\times$-cone of $X$ is of dimension at most $d$ and it suffices to set
\[
\Omega=\set{V\in G(n,n-d)\mid V\cap C_x^{K^\times}(X)=\set{0}},
\]
which is indeed dense in $G(n,n-d)$ In particular, for any $V\in \Omega$, $p_{!,x}(\varphi)$ is well defined for any $\varphi\in \Cm(X)_x$.

With these notations, we can now state our motivic local Cauchy-Crofton formula. Recall that $\Theta_d(X,x)\in \Cm(*)\otimes \Qq$ is the motivic local density of $X$ at $x$, 

\begin{theorem}[local Cauchy-Crofton]
\label{thm-localCC}
Let $X\subseteq K^n$ a definable set of dimension $d$ and $x\in K^n$. Then 
\[
\Theta_d(X,x)=\int_{V\in \Omega\subseteq G(n,n-d)} \Theta_d(p_{V!,x}(\un_X),0) \omega_{n,n-d}(V).
\]
\end{theorem}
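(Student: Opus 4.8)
The plan is to reduce the formula, via the tangent cone results of \cite{MLD} quoted above, to the case where $X$ is a $K^\times$-cone, and then to prove the conical case by a direct computation in the motivic integral. First I would recall that by Theorems 3.25 and 5.12 of \cite{MLD} there is a $\Lambda\in\mathcal{D}$ such that $\Theta_d(X,x)=\Theta_d(CM_x^\Lambda(X),0)$; moreover, the projection commutes (up to a set of dimension $<d$) with taking the tangent cone, so that for $V\in\Omega$ one has $C_0^\Lambda(p_V(X)) = p_V(C_x^\Lambda(X))$ and the multiplicities match, giving $\Theta_d(p_{V!,x}(\un_X),0)=\Theta_d(p_{V!,0}(CM_x^\Lambda(X)),0)$. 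This step requires checking that the condition $(*)$ locus $\Omega$ for $X$ at $x$ agrees, up to a set of dimension $<n-d$ in $G(n,n-d)$, with the corresponding locus for the cone, which follows from the description $\Omega=\{V\mid V\cap C_x^{K^\times}(X)=\{0\}\}$. Thus we may and do assume $x=0$ and $X=C$ is a $K^\times$-invariant definable set of dimension $d$ (with its multiplicity function), and the local density becomes a global normalized volume: $\Theta_d(C,0) = \mu_d(C\cap B(0,1))\cdot(1-\eL^{-d})^{-1}$ or the analogous finite expression, by the cone formula for densities in \cite{MLD}.

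In the conical case the strategy is the Fubini/change-of-variables computation mirroring \cite[Section 6]{cluckers_local_2012}. I would introduce the incidence variety $Z=\{(u,V)\in C\times G(n,n-d)\mid u\notin V\}$ (a dense definable subset of $C\times G$, since for fixed generic $u$ the bad $V$'s form a proper subvariety), equipped with the two projections $q_1\colon Z\to C$ and $q_2\colon Z\to G(n,n-d)$. Over $Z$ one has the tautological family of projections $p_V$, and $p_{V!,0}(\un_C)$ evaluated at a point $w\in K^d=K^n/V$ counts, with multiplicity, the fibre $p_V^{-1}(w)\cap C$; because $C$ is a cone, this fibre is either $\{0\}$ or a union of $K^\times$-orbits, and the local density at $0$ of the pushforward is computed by integrating over the unit ball in $K^n/V$. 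The key identity to establish is that
\[
\int_{V\in\Omega}\Theta_d\bigl(p_{V!,0}(\un_C),0\bigr)\,\omega_{n,n-d}(V)
= \int_{V\in\Omega}\int_{w\in B_{K^n/V}(0,1)} m_V(w)\,\frac{\mu}{1-\eL^{-d}}\,\omega_{n,n-d}(V),
\]
where $m_V(w)=\#(p_V^{-1}(w)\cap C)$ with the cone multiplicities, and then to recognize the right-hand side, after interchanging the order of integration along $q_1$ and $q_2$ (legitimate by the motivic Fubini theorem of \cite{cluckers_constructible_2008}), as a single integral over $C\cap B(0,1)$ of the relative volume of the fibres of $q_1$ in the Grassmannian — which, by $GL_n(\mathcal{O}_K)$-invariance and the normalization $\int_G\omega_{n,n-d}=1$, is constant equal to $1$. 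This yields $\mu_d(C\cap B(0,1))/(1-\eL^{-d}) = \Theta_d(C,0)$.

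The normalization of $\omega_{n,n-d}$ and the Jacobian bookkeeping is where the work concentrates: one must verify that the canonical $GL_n(\mathcal{O}_K)$-invariant form on $G(n,n-d)$, the Haar-type measure on $K^n$, and the measure on the quotients $K^n/V$ fit together so that the fibrewise volume of $\{V : u\notin V\}$ (for a generic fixed $u\neq 0$), measured by $\omega_{n,n-d}$ pushed through $q_1$, is exactly $1$ — that is, the analogue of the classical constant in the Cauchy-Crofton formula is $1$ with this normalization. I expect the main obstacle to be precisely this coarea/change-of-variables step: making the motivic change of variables formula apply to the family of projections $p_V$ over the Grassmannian, controlling the Jacobian of $(u,V)\mapsto(p_V(u),V)$, and checking that the relevant integrals converge and that the exceptional loci (where $(*)$ fails, or where multiplicities jump) have dimension strictly smaller than $d$ in $C$ respectively $n-d$ in $G$, so they do not contribute to the densities. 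Once the conical identity is in place with the correct constant, the general case follows immediately from the first paragraph's reduction.
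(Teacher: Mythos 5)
Your overall strategy (reduce to the tangent cone via \cite[Theorems 3.25, 5.12]{MLD}, then prove a conical Crofton identity by Fubini over the Grassmannian, invariance and the normalization of $\omega_{n,n-d}$) is the same as the paper's, but the two places where the actual work lies are asserted rather than proved, and one of the assertions is wrong as stated. First, the reduction step: you claim that ``the projection commutes with taking the tangent cone and the multiplicities match'', i.e.\ $\Theta_d(p_{V!,x}(\un_X),0)=\Theta_d(p_{V!,0}(CM_x^\Lambda(X)),0)$ for $V$ in a dense set. This is precisely the delicate point, and it is not automatic: the pushforward of $X$ and the pushforward of its cone can a priori differ in how fibres collapse near the critical directions. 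The paper handles it by first reducing (via \cite[Proposition 2.14, Lemma 3.9]{MLD}) to $X$ a graph of a $1$-Lipschitz function, so that $CM_0^\Lambda(X)=\un_{C_0^\Lambda(X)}$, and then proving a technical partition lemma (Lemma \ref{lem-partitionCLampVinj}): a $\kk$-partition of $X$ such that $p_V$ is injective on each $C_0^\Lambda(X_\xi)$ off a set $C_\xi$ of dimension $<d$, after which $p_{V!,0}(X_\xi)$ and $p_{V!,0}(C_0^\Lambda(X_\xi))$ are indicator functions up to lower-dimensional error, and \cite[Proposition 5.2]{MLD} applied to $p_V(X_\xi)$, using $C_0^\Lambda(p_V(X_\xi))=p_V(C_0^\Lambda(X_\xi))$, concludes. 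Your outline contains no substitute for this injectivity/partition argument. (Also, the cone you reduce to is only a $\Lambda_{e,r}$-cone, not a $K^\times$-cone, which is why the density formula carries the average $\frac{1}{e}\sum_i\eL^{di}\mu_d(\cdot\cap S(0,i))$ rather than a single ball volume; this is minor but affects your formulas.)

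Second, in the conical case your claim that, after interchanging the integrals, the fibrewise Grassmannian quantity ``is constant equal to $1$ by $GL_n(\calO_K)$-invariance and the normalization'' does not hold as stated. Because $\pi_!$ here is the fibre-counting pushforward while the density on the source is the $d$-dimensional measure $\mu_d$ on the cone, converting $\int_{w}m_V(w)$ into an integral over $u\in C$ introduces the Jacobian factor $\eL^{-\ord(\Jac(p_V|_C)(u))}$, and moreover $p_V$ moves points between valuative spheres, so the inner integral for fixed $u$ is a Jacobian-weighted volume over a region of $V$'s depending on $\ord(u)$ --- it is not the full-measure set $\{V\mid u\notin V\}$ and is not $1$ pointwise. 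The paper's mechanism is exactly designed to deal with this: the rescaling map $\varphi_V(x)=t^{\ord(x)-\ord(\pi_V(x))}\pi_V(x)$ brings projected points back to the correct sphere, the change of variables and Fubini produce $C_i(x)=\int_V\eL^{-\ord(\Jac\varphi_V(x))}\omega_{n,n-d}(V)$, which is shown to be a constant $C$ independent of $x$ and $i$ by $GL_n(\calO_K)$-invariance and linearity, and then $C=1$ is obtained not by a direct coarea computation but by evaluating the resulting identity on $X=\Pi$ a $d$-dimensional linear subspace (this is where the normalization $\int\omega_{n,n-d}=1$ enters). You explicitly flag this Jacobian/normalization step as the main obstacle and leave it open; the test-case evaluation on a linear subspace is the missing idea that closes it without any explicit Jacobian computation. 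As it stands, the proposal is an outline of the right shape with the two essential arguments (the partition/injectivity lemma and the constant-$C$/test-case argument) missing.
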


By \cite[Proposition 3.8]{MLD}, we may assume $X=\overline{X}$. We can also assume $x=0$ and $0\in X$. Indeed, if $0\notin X$,  then both sides of the formula are 0.

\section{Tangential Crofton formula}

We start by proving the theorem in the particular case where $X$ is a $\Lambda$-cone.

\begin{lemma}
\label{lem-CC-singlecone}
Let $X$ be a definable $\Lambda$-cone with origin 0 contained in some $\Pi\in G(n,d)$. Then
\[
\Theta_d(X,0)=\int_{V\in \Omega\subseteq G(n,n-d)} \Theta_d(p_{V}(\un_X),0) \omega_{n,n-d}(V).
\]
\end{lemma}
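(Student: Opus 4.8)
The plan is to reduce the statement about a $\Lambda$-cone $X \subseteq \Pi$ to an essentially combinatorial/geometric computation on the Grassmannian, using that the density of a cone contained in a $d$-plane is multiplicative with respect to the motivic measure. First I would observe that since $X$ is a $\Lambda$-cone inside $\Pi \in G(n,d)$, its local density $\Theta_d(X,0)$ equals $\frac{\mu_d(X \cap B(0,0))}{\eL^{0}} = \mu_d(X \cap \Oo_K^n \cap \Pi)$, i.e.\ the $d$-dimensional motivic volume of the ``unit part'' of the cone inside $\Pi$; this is because a $\Lambda$-cone is, up to the finite index coming from $\Lambda$, scale-invariant, so the sequence $\theta_n$ defining the density is (eventually) constant along the relevant subprogression. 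The analogous reduction applies to each projection: for $V \in \Omega$, the set $p_V(X)$ is again a $\Lambda$-cone in $K^n/V \cong K^d$, so $\Theta_d(p_V(\un_X),0)$ is computed the same way, as a motivic volume of the image of the unit part — here one must track the pushforward multiplicities $p_{V!}$, but on a cone these are governed by the (finite) degree of $p_V|_\Pi : \Pi \to K^n/V$.

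Next I would unwind the right-hand integral. For $V \in \Omega$, the restriction $p_V|_\Pi$ is a linear isomorphism $\Pi \to K^n/V$ precisely when $V \cap \Pi = \set{0}$, which holds on a dense definable subset of $\Omega$ (and off this set the contribution is lower-dimensional, hence negligible for a density computation). On this locus, $p_{V!}(\un_X)$ is just the transport of $\un_X$ along the isomorphism $p_V|_\Pi$, so $\Theta_d(p_V(\un_X),0) = \mu_d\!\left((p_V|_\Pi)(X \cap \Oo_K^n \cap \Pi')\right)$ for a suitable lattice — and by the change of variables formula in motivic integration this equals $\Theta_d(X,0)$ multiplied by the Jacobian $|{\det}(p_V|_\Pi)|$ measured against the chosen identifications. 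So the right-hand side becomes $\Theta_d(X,0) \cdot \int_{V \in \Omega} |{\det}(p_V|_\Pi)| \, \omega_{n,n-d}(V)$, and the whole lemma reduces to the normalization identity
\[
\int_{V \in G(n,n-d)} \abs{\det(p_V|_\Pi)} \, \omega_{n,n-d}(V) = 1
\]
for every fixed $\Pi \in G(n,d)$.

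To prove that identity I would exploit the $GL_n(\Oo_K)$-invariance of $\omega_{n,n-d}$: the integral is independent of $\Pi$, so it equals its average over $\Pi \in G(n,d)$ against $\omega_{n,d}$, and by a Fubini/double-coset argument on the flag variety of pairs $(V,\Pi)$ with $V \cap \Pi = \set{0}$ one recognizes it as the total mass of a $GL_n(\Oo_K)$-invariant measure, which the normalizations of $\omega_{n,d}$ and $\omega_{n,n-d}$ force to be $1$. Concretely this is the motivic shadow of the $p$-adic computation in \cite[Section 6]{cluckers_local_2012}: the relevant integral of $\abs{\det}$ over the Grassmannian is exactly what makes the Cauchy-Crofton constant equal to $1$ after the volume forms are normalized as in Section \ref{Grass}. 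I expect the main obstacle to be the bookkeeping in this last step — keeping the identifications $K^n/V \cong K^d$ via the auxiliary $g \in GL_n(\Oo_K)$ compatible with the Jacobian factor, and justifying the Fubini exchange on the Grassmannian at the level of constructible motivic functions — rather than the geometry of cones, which is straightforward once the density is identified with a motivic volume.
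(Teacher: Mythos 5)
There is a genuine gap at the central step of your reduction. You claim that for $V$ with $V\cap\Pi=\set{0}$, transporting along the linear isomorphism $p_V|_\Pi$ multiplies the local density by $\abs{\det(p_V|_\Pi)}$, so that the right-hand side becomes $\Theta_d(X,0)\cdot\int_V\abs{\det(p_V|_\Pi)}\,\omega_{n,n-d}(V)$, and you then reduce the lemma to the identity $\int_V\abs{\det(p_V|_\Pi)}\,\omega_{n,n-d}(V)=1$. Both claims are false. Test them on $X=\Pi$: then $p_V(\Pi)$ is all of $K^n/V\simeq K^d$, so $\Theta_d(p_V(\un_\Pi),0)=1=\Theta_d(\Pi,0)$, while $\abs{\det(p_V|_\Pi)}$ (computed in the lattice identifications of Section 4) has valuation $\geq 0$ for all $V$ and valuation $\geq 1$ on a piece of $G(n,n-d)$ of nonzero measure, namely the $V$ whose reduction meets the reduction of $\Pi$ nontrivially; for $n=2$, $d=1$ a direct computation gives $\int_V\abs{\det(p_V|_\Pi)}\,\omega_{2,1}(V)=(\eL^2+\eL+1)/(\eL+1)^2\neq 1$. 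The conceptual point is that the local density of a cone is scale-invariant, so it does not transform under a linear isomorphism by the determinant; it transforms by a direction-dependent factor recording how the map distorts valuations point by point. This is precisely why the paper's proof does not use $p_V$ itself but the renormalized map $\varphi_V(x)=t^{\ord(x)-\ord(p_V(x))}p_V(x)$, which preserves the valuation of each point: working sphere by sphere, change of variables and Fubini produce the average $C(x)=\int_V\eL^{-\ord\Jac(\varphi_V(x))}\omega_{n,n-d}(V)$, the $GL_n(\calO_K)$-invariance of $\omega_{n,n-d}$ is used to show $C(x)$ is independent of $x$ (not to evaluate a determinant integral), and only then is the constant pinned to $1$ by plugging $X=\Pi$ into the identity just proved. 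Your plan has no substitute for this renormalization, and the identity it relies on cannot be repaired by a cleverer flag-variety argument since it is numerically false.

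A secondary, more fixable inaccuracy: for a $\Lambda_{e,r}$-cone with $e>1$, $\Theta_d(X,0)$ is not $\mu_d(X\cap B(0,0))$. Each subsequence $\theta_{ke+i}$ is indeed constant, but the limits $d_i$ differ with $i$, and the density is their average, which works out to $\frac{1}{e(1-\eL^{-d})}\sum_{i=0}^{e-1}\eL^{di}\mu_d(X\cap S(0,i))$; only for $e=1$ does this collapse to the volume of the unit part. Moreover $\pi_V$ does not respect the decomposition into spheres, so even granting the correct formula one must redistribute volumes between spheres of different radii — this is exactly the bookkeeping with the sets $B_i^j$ and $D_i^V$ in the paper's proof, which your plan omits.
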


\begin{proof}
Assume $\Lambda=\Lambda_{e,r}$. Fix some $V\in G(n,n-d)$ such that $\pi_V : \Pi \to K^d$ is bijective. As $X$ is a $\Lambda$-cone and $\pi_V$ is linear, $\pi_V(X)$ is also a $\Lambda$-cone. From the definition of local density, see also \cite[Remark 3.11]{MLD}, we have
\begin{equation}
\label{eqn-thetaX}
\Theta_d(X,0)=\frac{1}{e(1-\eL^{-d})}\sum_{i=0}^{e-1} \eL^{di} \mu_d(X\cap S(0,i))
\end{equation}
and 
\begin{equation}
\label{eqn-thetapiVX}
\Theta_d(\pi_V(X),0)=\frac{1}{e(1-\eL^{-d})}\sum_{j=0}^{e-1} \eL^{dj} \mu_d(X\cap S(0,j)).
\end{equation}
Let 
\[
A_i=\set{y\in \pi_V(X)\mid \exists x\in X\cap S(0,i),\exists \lambda\in \Lambda, y=\lambda\pi_V(x)}
\]
Then since $X$ is a $\Lambda$-cone and $\pi_V$ is bijective, we have a disjoint union
\[
\pi_V(X)\backslash\set{0}=\dbigcup_{i=0}^{e-1}A_i.
\]
Now set $B_i^j=A_i\cap S(0,j)$ and $D_i^V=\dbigcup_{j=0}^{e-1}t^{i-j}B_i^j$. 
The set $C_j$ is indeed a disjoint union since it is the image of $X\cap S(0,i)$ by the application
\[
\varphi_V : x\in X \mapsto t^{\ord(x)-\ord(\pi_V(x)} \pi_V(x).
\]
Indeed the function $\varphi_V$ restricted to $X\cap S(0,i)$ is a definable bijection of image $D_i^V$ since $\pi_V$ is linear and bijective on $\Pi$. 

By the change of variable formula, we have
\begin{equation}
\label{eqn-chgvar-varphi}
\mu_d(D_i^V)=\int_{X\cap S(0,i)} \eL^{-\ord(\Jac(\varphi_V(x)))}.
\end{equation}

By Fubini theorem we get
\[
\int_{V\in G(n,n-d)} \mu_d(D_i^V)\omega_{n,n-d}(V)=\int_{x\in X\cap S(0,i)}\int_{V\in G(n,n-d)} \eL^{-\ord(\Jac(\varphi_V(x)))}\omega_{n,n-d}(V).
\]
Set $C_i(x)=\int_{V\in G(n,n-d)} \eL^{-\ord(\Jac(\varphi_V(x)))}\omega_{n,n-d}(V)$. We claim that $C_i(x)$ is independent of $x\in S(0,i)$. Indeed, if $x,x'\in S(0,i)$, we can find some $g\in GL_n(\calO_K)$ such that $x'=gx$. Since $\omega_{n,n-d}$ is invariant under $GL_n(\calO_K)$-transformations and $\varphi_{V}(x)=\varphi_{gV}(x')$, by the change of variable formula we get that $C_i(x)$ is equal to
\[
\int_{V\in G(n,n-d)} \eL^{-\ord(\Jac(\varphi_V(x)))}\omega_{n,n-d}(V)=\int_{V'\in G(n,n-d)} \eL^{-\ord(\Jac(\varphi_{V'}(gx)))}\omega_{n,n-d}(V').
\]
Moreover, it is independent of $i$ by linearity of $\pi_V$, hence we denote it by $C$ and we have
\begin{equation}
\label{eqn-finalmuDiV}
\int_{V\in G(n,n-d)} \mu_d(D_i^V)\omega_{n,n-d}(V)=C\mu_d(X\cap S(0,i).
\end{equation}

We also have 
\begin{equation}
\label{eqn-changevarBij}
\eL^{dj}\mu_d(B_i^j)=\eL^{di}\mu_d(t^{i-j}B_i^j),
\end{equation}
 hence
\begin{align}
\sum_{j=0}^{e-1} \eL^{dj} \mu_d(\pi_V(X)\cap S(0,j))
&=\sum_{j=0}^{e-1}\sum_{i=0}^{e-1}\eL^{dj} \mu_d(B_i^j) \notag\\
&=\sum_{i=0}^{e-1}\sum_{j=0}^{e-1} \eL^{di} \mu_d(t^{i-j}B_i^j)\notag\\
&=\sum_{i=0}^{e-1}\eL^{di} \mu_d(D_i^V).\label{eqn-sumThetaDiV}
\end{align}

Combining Equations \ref{eqn-thetapiVX}, \ref{eqn-sumThetaDiV} and \ref{eqn-finalmuDiV}, we get
\[
\int_{V\in G(n,n-d)}\Theta_d(\pi_V(X),0)\omega_{n,n-d}(V)=C\frac{1}{e(1-\eL^{-d})}\sum_{i=0}^{e-1} \eL^{di} \mu_d(X\cap S(0,i)).
\]
This last expression is equal to $=C\Theta_d(X,0)$. We find $C=1$ by computing both sides of the previous equality with $X=\Pi$. 
\end{proof}

This following lemma is the motivic analogous of the classical spherical Crofton formula, see for example \cite[Theorem  3.2.48]{federer_geometric_1969}. See also \cite[Remark 6.2.4]{cluckers_local_2012} for a reformulation in the $p$-adic case. 

\begin{lemma}
\label{lem-CC-generalLamcone}
Let $X$ be a definable $\Lambda$-cone with origin 0.  Then
\[
\Theta_d(X,0)=\int_{V\in G(n,n-d)} \Theta_d(p_{V!,0}(\un_X),0) \omega(V).
\]
\end{lemma}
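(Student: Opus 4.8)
The plan is to reduce the general $\Lambda$-cone to the case already handled in Lemma \ref{lem-CC-singlecone}, namely a $\Lambda$-cone contained in a single $d$-plane $\Pi \in G(n,d)$. First I would invoke the structure theory of $\Lambda$-tangent cones from \cite{MLD}: up to a definable set of dimension $< d$, the $\Lambda$-cone $X$ decomposes as a finite disjoint union $X = \bigsqcup_{\ell} X_\ell$ where each $X_\ell$ is a $\Lambda$-cone whose closure is contained in some $\Pi_\ell \in G(n,d)$ (this is the "local conical structure" of definable sets of dimension $d$, analogous to the fact that the tangent cone is a finite union of $d$-planes on a dense open locus, matching \cite[Remark 6.2.4]{cluckers_local_2012}). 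Since both the local density $\Theta_d(-,0)$ and the integral over the Grassmannian are additive on disjoint unions, and a set of dimension $<d$ contributes $0$ to $\Theta_d$ (by \cite[Proposition 3.8]{MLD} or the definition), it suffices to prove the formula for each piece $X_\ell$.

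Next I would reconcile $p_{V!,0}(\un_{X_\ell})$ with $p_V(\un_{X_\ell})$ on the dense set $\Omega$ of $V \in G(n,n-d)$ for which condition $(*)$ holds. For a fixed $\ell$, the restriction $p_V|_{\Pi_\ell}$ is bijective for all $V$ in a dense definable subset of $G(n,n-d)$, namely $\{V \mid V \cap \Pi_\ell = \{0\}\}$; on that locus $p_{V!,0}(\un_{X_\ell})$ is exactly the characteristic function $\un_{p_V(X_\ell)}$ (no multiplicities arise since $p_V$ is injective on $\Pi_\ell$ and hence on $X_\ell$), and Lemma \ref{lem-CC-singlecone} applies verbatim to give $\Theta_d(X_\ell,0) = \int_{V} \Theta_d(p_V(\un_{X_\ell}),0)\, \omega_{n,n-d}(V)$. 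Summing over $\ell$ and using additivity gives $\Theta_d(X,0) = \int_V \big(\sum_\ell \Theta_d(p_{V!,0}(\un_{X_\ell}),0)\big)\,\omega_{n,n-d}(V)$.

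The remaining point — and the step I expect to be the main obstacle — is that $\sum_\ell p_{V!,0}(\un_{X_\ell})$ need not equal $p_{V!,0}(\un_X)$ as germs of constructible functions, because the $X_\ell$ may have overlapping projections and, more seriously, the multiplicities encoded in $p_{V!,0}$ count fibers with weights that must be handled on the nose. Here I would argue that for $V$ in the dense set $\Omega$, the fiber of $p_V$ over a generic point of $p_V(X)$ meets $X$ in finitely many points, each lying in a unique piece $X_\ell$ up to a lower-dimensional set, so that $p_{V!,0}(\un_X) = \sum_\ell p_{V!,0}(\un_{X_\ell})$ holds modulo a function whose support projects to dimension $< d$, which again contributes $0$ to $\Theta_d$. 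Integrating the resulting identity of local densities over the Grassmannian — justified since all functions in sight are locally bounded and the integrand is a constructible function of $V$ by the relative theory of \cite{cluckers_motivic_????} — yields the formula. One should also record that $\Omega(X,0) \subseteq \bigcap_\ell \{V \mid V\cap \Pi_\ell = \{0\}\}$ up to a lower-dimensional discrepancy, so that replacing $\Omega$ by $G(n,n-d)$ in the statement changes nothing, the complement being a proper definable subset of measure zero for $\omega_{n,n-d}$.
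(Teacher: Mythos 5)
Your reduction collapses at the first step: the structural claim that a definable $\Lambda$-cone $X$ of dimension $d$ decomposes, up to a set of dimension $<d$, into finitely many $\Lambda$-cones each contained in a single $d$-plane $\Pi_\ell\in G(n,d)$ is false, and no such statement is available in \cite{MLD} or \cite{cluckers_local_2012} (the cited Remark 6.2.4 is a reformulation of the spherical Crofton formula, not a conical structure theorem). Cones at a singular point need not be unions of linear spaces: already in $K^3$ the set $X=\set{\lambda(1,t,t^2)\mid \lambda\in\Lambda,\ t\in\calO_K}$ is a definable $\Lambda$-cone of dimension $2$, but a plane $ax+by+cz=0$ through the origin meets the curve $t\mapsto(1,t,t^2)$ only at the roots of $a+bt+ct^2$, so any finite union of $2$-planes contains only finitely many of the lines $\Lambda\cdot(1,t,t^2)$, i.e.\ a subset of $X$ of dimension $\leq 1$. (What is true is that at a \emph{generic} point of a $d$-dimensional set the tangent cone is a $d$-plane; that does not help at the fixed point $0$.) Since the pieces $X_\ell$ do not exist, Lemma \ref{lem-CC-singlecone} cannot be applied ``verbatim'', and the subsequent bookkeeping comparing $\sum_\ell p_{V!,0}(\un_{X_\ell})$ with $p_{V!,0}(\un_X)$ has nothing to rest on. This is exactly why the general-cone case requires its own argument rather than a reduction to the single-plane case.

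The paper's proof instead reruns the computation of Lemma \ref{lem-CC-singlecone} directly on the general cone: one uses the same rescaling map $\varphi_V(x)=t^{\ord(x)-\ord(p_V(x))}\,p_V(x)$, now restricted to the smooth part of $X$. It is no longer injective on $X\cap S(0,i)$, but the pushforward $p_{V!,0}(\un_X)$ records the finite fibers with their motivic weights, so the change-of-variables and Fubini steps go through with the fiber multiplicities absorbed into the integrand; the $GL_n(\calO_K)$-invariance of $\omega_{n,n-d}$ again yields a constant $C$ independent of $x\in S(0,i)$ and of $i$, and $C=1$ is found by evaluating both sides on a $d$-dimensional linear subspace. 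If you want to keep a partition-based strategy, note that the useful partitions (as in Lemma \ref{lem-partitionCLampVinj}, via finite $b$-minimality) make $p_V$ injective on each piece \emph{for a fixed $V$}; the pieces are then still general cones, not cones inside $d$-planes, so you would still need the weighted single-cone argument just described, and you would additionally have to deal with the dependence of the partition on $V$ before integrating over the Grassmannian.
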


\begin{proof}
We only have to modify slightly the proof of Lemma \ref{lem-CC-singlecone}. Indeed, we use now the function
\[
\varphi_V : x\in X \mapsto t^{\ord(x)-\ord(\pi_V(x)} \pi_V(x).
\]
restricted to the smooth part of $X$. It is now longer injective on $X\cap S(0,i)$, however the motivic volume of the fibers is taken into account in $p_{V!,0}(X)$. Hence we get similarly
\[
\int_{V\in G(n,n-d)}\Theta_d(\pi_{V!,0}(X),0)\omega_{n,n-d}(V)=C\frac{1}{e(1-\eL^{-d})}\sum_{i=0}^{e-1} \eL^{di} \mu_d(X\cap S(0,i)),
\]
which is equal to $C\Theta_d(X,0)$. Once again, we find $C=1$ by computing both sides for $X$ a vector space of dimension $d$. 
\end{proof}

\section{General case}

Before proving Theorem \ref{thm-localCC}, we need a technical lemma. 

\begin{lemma}
\label{lem-partitionCLampVinj}
Let $X\subseteq K^n$ be a definable set of dimension $d$ and $V\in G(n,n-d)$ such that the projection $p_V : C_0^\Lambda(X) \to K^d$ is finite-to-one. Then there is a definable $\kk$-partition of $X$ such that for each $\kk$-part $X_\xi$, there is a $\xi$-definable set $C_\xi$ of dimension less than $d$ such that $p_V$ is injective on $C_0^\Lambda(X_\xi)\backslash C_\xi$. 
\end{lemma}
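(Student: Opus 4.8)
The plan is to reduce the injectivity statement for $p_V$ on the $\Lambda$-tangent cone to a finiteness/decomposition statement that can be handled by standard definable choice and cell-decomposition techniques in the residue field sort. Since $p_V : C_0^\Lambda(X) \to K^d$ is finite-to-one by hypothesis, there is a uniform bound $N$ on the size of its fibers (this uses compactness/saturation of $K$, together with the fact that $C_0^\Lambda(X)$ is definable and of dimension at most $d$). First I would stratify $C_0^\Lambda(X)$ by the fiber cardinality of $p_V$, so that over a definable dense open $U \subseteq p_V(C_0^\Lambda(X))$ the fiber $p_V^{-1}(u)$ has constant size $N_U \leq N$; the locus where the fiber is smaller, or where $C_0^\Lambda(X)$ fails to be a graph, has dimension $< d$ and can be absorbed into the exceptional set $C_\xi$. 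On $U$, definable choice (available in the theory $\mathcal{T}$, possibly after passing to the $\kk$-sort) provides, Skolem-style, definable sections $s_1, \dots, s_{N_U} : U \to C_0^\Lambda(X)$ enumerating the fibers.

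The key point is then to turn these finitely many sections into a $\kk$-partition of $X$ itself. Recall that $C_0^\Lambda(X)$ is swept out from $X$ by the tangent-cone construction with parameter $\Lambda$; concretely each point of $C_0^\Lambda(X)$ arises (up to a set of dimension $< d$) as a limit $\lambda(y - 0)$ with $y \in X$, $\lambda \in \Lambda$. I would use the definable families underlying this construction to pull back the section-indexing on $C_0^\Lambda(X)$ to a definable function on (a dense open subset of) $X$ valued in the finite residue-field set $\{1, \dots, N\}$, and let the fibers of that function be the $\kk$-parts $X_\xi$. The residue field $k$ being the home of this finite index set is exactly why the partition is a $\kk$-partition rather than a partition over the value group or the valued field. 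By construction, on each $X_\xi$ the relevant branch of $C_0^\Lambda$ is selected, so $p_V$ restricted to $C_0^\Lambda(X_\xi)$ is generically a single-valued graph, i.e. injective off a $\xi$-definable set $C_\xi$ of dimension $< d$ collecting the overlaps of the branches and the bad locus from the first step.

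The main obstacle I anticipate is the compatibility between the partition of $X$ and the tangent-cone operation: the tangent cone is only well-defined up to dimension $< d$, and $C_0^\Lambda(X_\xi)$ need not equal the "branch" of $C_0^\Lambda(X)$ picked out by section $s_\xi$ — it could a priori be larger (new tangent directions can appear when one restricts to a piece) or the decomposition of $X$ could fail to respect the $\Lambda$-action. To control this I would argue that for a cofinite (in the dimension sense) choice of decomposition one has $C_0^\Lambda(X) = \bigcup_\xi C_0^\Lambda(X_\xi)$ up to dimension $< d$ — this is the kind of statement proved in \cite{MLD} about $\Lambda$-tangent cones and definable partitions — and then intersect with the branch locus. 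A secondary technical point is ensuring all the choices (the bound $N$, the sections $s_i$, the pullback to $X$) are made uniformly so that the parameters $\xi$ range over a single definable subset of a Cartesian power of $k$; this is routine given that $\mathcal{T}$ is tame or mixed-tame and hence has definable Skolem functions in the geometric sorts after adding angular components, exactly as in the setup recalled at the start of the paper.
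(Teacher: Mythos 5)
There is a genuine gap at the step you yourself flag as the main obstacle, and the remedy you sketch does not close it. What the lemma needs is that each piece $X_\xi$ of the partition of $X$ has its tangent cone contained in (the closure of) a \emph{single} branch of $C_0^\Lambda(X)$, so that injectivity of $p_V$ on that branch passes to $C_0^\Lambda(X_\xi)\setminus C_\xi$. Your proposed control, namely that $C_0^\Lambda(X)=\bigcup_\xi C_0^\Lambda(X_\xi)$ up to a set of dimension $<d$, is the wrong containment: even granting it, a single $C_0^\Lambda(X_\xi)$ could still meet several branches, and then $p_V$ would not be injective on it off any small set. Moreover, your mechanism for "pulling back the section-indexing" from the cone to $X$ is never made concrete: a point of $X$ near $0$ does not canonically determine a point of the cone (the cone is a limit object), so fibering $X$ by a residue-field-valued index requires an actual geometric construction. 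The paper supplies exactly this: after using finite $b$-minimality to get a $\kk$-partition of $C_0^\Lambda(X)$ into parts $C_0^\Lambda(X)_\xi$ on which $p_V$ is injective, one partitions the ambient space by proximity of direction, letting $B_\xi$ be the union of lines $\ell$ through $0$ whose trace on the unit sphere $S(0,0)$ is strictly closer to $C_0^\Lambda(X)_\xi\cap S(0,0)$ than to every other part, and sets $X_\xi=X\cap B_\xi$. This "closest branch" device is what guarantees $C_0^\Lambda(X_\xi)\subseteq \overline{C_0^\Lambda(X)_\xi}$, that the left-over piece $Y=X\setminus\bigcup_\xi X_\xi$ has empty cone, and that one may take $C_\xi=\overline{C_0^\Lambda(X)_\xi}\setminus C_0^\Lambda(X)_\xi$, which has dimension $<d$. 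Nothing in your outline plays this role.

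A secondary inaccuracy: your fiber-enumeration step relies on "definable Skolem functions in the geometric sorts", which the theory $\mathcal{T}$ is not assumed to have and which the paper's setup does not recall; in general one cannot definably order the points of a finite fiber in the valued field. The correct tool, and the one the paper uses, is finite $b$-minimality, which yields injectivity of a finite-to-one map after a $\kk$-partition (fibers parametrized by residue-field variables) rather than definable sections $s_1,\dots,s_{N_U}$ over a dense open set. Your stratification by fiber cardinality is harmless but unnecessary once this is used.
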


\begin{proof}
We can assume $\Lambda=K^\times$. 
As the projection $p_V :  C_0^\Lambda(X) \to K^d$ is finite-to-one, by finite $b$-minimality one can find a $\kk$-partition of $C_0^\Lambda(X)$ such that $p_V$ is injective on each $\kk$-part of $C_0^\Lambda(X)$. For a $\kk$-part $C_0^\Lambda(X)_\xi$, define $B_\xi$ to be the $\xi$-definable subset of $K^n$ defined as the union of lines $\ell$ passing through 0 such that the distance between $\ell\cap S(0,0)$ and $C_0^\Lambda(X)_\xi\cap S(0,0)$ is strictly smaller than the distance between $\ell\cap S(0,0)$ and $C_0^\Lambda(X)_{\xi'}\cap S(0,0)$ for every $\xi'\neq \xi$. Set $X_\xi=X\cap B_\xi$. Then setting $Y=X\backslash \cup_{\xi}X_\xi$, we have $C_0^\Lambda(Y)$ empty and $C_0^\Lambda(X_\xi)\subseteq \overline{C_0^\Lambda(X)_\xi}$. Hence we set $C_\xi=\overline{C_0^\Lambda(X)_\xi}\backslash C_0^\Lambda(X)_\xi$ and we have $X_\xi$ and $C_\xi$ as required. 
\end{proof}

\begin{proof}[Proof of Theorem \ref{thm-localCC}]
From \cite[Theorem 3.25]{MLD}, there is a $\Lambda\in \calD$ such that $\Theta_d(X,0)=\Theta_d(CM_0^\Lambda(X),0)$. As in the proof of \cite[Theorem 3.25]{MLD}, by \cite[Proposition 2.14 and Lemma 3.9]{MLD}, we can assume $X$ is the graph of a 1-Lipschitz function defined on some definable set $U\subset K^d$. In this case, $CM_0^\Lambda(X)=\un_{C_0^\Lambda(X)}$. From
Lemma \ref{lem-CC-generalLamcone}, we have
\[
\Theta_d(C_0^\Lambda(X),0)=\int_{V\in G(n,n-d)} \Theta_d(p_{V!,0}(C_0^\Lambda(X)),0) \omega_{n,n-d}(V).
\]
Hence we need to show that for every $V$ in a dense subset of $G(n,n-d)$, 
\[
\Theta_d(p_{V!,0}(C_0^\Lambda(X)),0)=\Theta_d(p_{V!,0}(X),0).
\]
We can find a $\kk$-partition of $X$ such that $p_V$ is injective on the $\kk$-parts. Replace $X$ by one of the $\kk$-parts and suppose then that $p_V$ is injective on $X$. 

Fix a $V\in G(n,n-d)$ such that $p_V$ is injective on $C_0^\Lambda(X)$. By Lemma \ref{lem-partitionCLampVinj}, there is a $\kk$-partition of $X$ (depending on $V$) such that for each $\kk$-part $X_\xi$ there is a $\xi$-definable set $C_\xi$ of dimension less that $d$ such that $p_V$ is injective on $C_0^\Lambda(X_\xi)\backslash C_\xi$. By a new use of \cite[Lemma 3.9]{MLD}, it suffices to show that 
\[
\Theta_d(p_{V!,0}(C_0^\Lambda(X_\xi)),0)=\Theta_d(p_{V!,0}(X_\xi),0).
\]
As $p_V$ is injective on $X$ and $C_0^\Lambda(X_\xi)\backslash C_\xi$, we have $p_{V!,0}(X)=\un_{p_V(X)}$ and
\[p_{V!,0}(C_0^\Lambda(X_\xi))=\un_{p_V(C_0^\Lambda(X_\xi)\backslash C_\xi)}+p_{V!,0}(C_\xi).
\]
We have $\Theta_d(p_{V!,0}(C_\xi),0)=0$ for dimensional reasons. As $C_0^\Lambda(p_V(X))=p_V(C_0^\Lambda(X))$ the result follows from \cite[Proposition 5.2]{MLD}.
\end{proof}

\small

\bibliographystyle{abbrv}

\end{document}